%%%%%%%%%%%%%%%%%%%%%%%%%%%%%%%%%%%%%%%%%%%%%%%%%%%%%
%%%%%%%%%%%%%%%%%%%% Seo 30, 2008 %%%%%%%%%%%%
%%%%%%%%%%%%%%%%%%%%%%%%%%%%%%%%%%

\documentclass{amsart}

\usepackage{amsmath}
\usepackage{amsfonts}
\usepackage{amssymb,enumerate}
\usepackage{amsthm}
\usepackage[all]{xy}
\usepackage{hyperref}

\DeclareMathOperator{\hight}{ht}

\newcommand{\Spec}{\operatorname{Spec}}

\newcommand{\td}{\operatorname{tr.deg.}}

\renewcommand{\dim}{\operatorname{dim}}

\newcommand{\fq}{\frak{q}}
\newtheorem{thm}{Theorem}[section]
\newtheorem{cor}[thm]{Corollary}

\begin{document}

\bibliographystyle{amsplain}

\date{}

\author{Parviz Sahandi}

\address{School of Mathematics, Institute for
Research in Fundamental Sciences (IPM), P.O. Box: 19395-5746,
Tehran, Iran and Department of Mathematics, University of Tabriz,
Tabriz, Iran} \email{sahandi@ipm.ir}

\keywords{Strong S-domain, stably strong S-domain, Jaffard
domain, quasi-Pr\"{u}fer domain, star operation}

\subjclass[2000]{Primary 13A15, 13G05, 13C15}

\thanks{This research was in part supported by a grant from IPM (No.
900130059)}

%\thanks{P. Sahandi was supported by the University of Tabriz}

\title[quasi-Pr\"{u}fer and UM$t$ domains]{On quasi-Pr\"{u}fer and UM$t$ domains}

\begin{abstract} In this note we show that an integral domain $D$ of finite $w$-dimension is a quasi-Pr\"{u}fer
domain if and only if each overring of $D$ is a $w$-Jaffard domain.
Similar characterizations of quasi-Pr\"{u}fer domains are given by
replacing $w$-Jaffard domain by $w$-stably strong S-domain, and
$w$-strong S-domain. We also give new characterizations of UM$t$
domains.
\end{abstract}

\maketitle

\section{Introduction}

The quasi-Pr\"{u}fer notion was introduced in \cite{ACE} for rings
(not necessarily domains). As in \cite{FHP}, we say that an integral
domain $D$ is a \emph{quasi-Pr\"{u}fer domain} if for each prime
ideal $P$ of $D$, if $Q$ is a prime ideal of $D[X]$ with $Q\subseteq
P[X]$, then $Q=(Q\cap D)[X]$. It is well known that an integral
domain is a Pr\"{u}fer domain if and only if it is integrally closed
and quasi-Pr\"{u}fer \cite[Theorem 19.15]{G}. There are several
different equivalent condition for quasi-Pr\"{u}fer domains (c.f.
\cite{FHP, ACE, AJ}).

On the other hand as a $t$-analogue, an integral domain $D$ is
called a \emph{UM$t$ domain} \cite{HZ}, if every upper to zero in
$D[X]$ is a maximal $t$-ideal and has been studied by several
authors (see \cite{FGH}, \cite{DHLRZ}, and \cite{S1}). UM$t$ domains
are closely related to quasi-Pr\"{u}fer domains in the sense that a
domain $D$ is a UM$t$ domain if and only if $D_P$ is a
quasi-Pr\"{u}fer domain for each $t$-prime ideal $P$ of $D$
\cite[Theorem 1.5]{FGH}. And the other relation is the
characterization of quasi-Pr\"{u}fer domains due to Fontana, Gabelli
and Houston \cite[Corollary 3.11]{FGH}; a domain $D$ is a
quasi-Pr\"{u}fer domain if and only if each overring of $D$ is a
UM$t$-domain.

In \cite{S} we defined and studied the $w$-Jaffard domains and
proved that all strong Mori domains (domains that satisfy the ACC on
$w$-ideals) and all UM$t$ domains of finite $w$-dimension, are
$w$-Jaffard domains. In \cite{Sah} we defined and studied a subclass
of $w$-Jaffard domains, namely the $w$-stably strong S-domains and
showed how this notion permit studies of UM$t$ domains in the spirit
of earlier works on quasi-Pr\"{u}fer domains. The aim of this paper
is to prove that, for a domain $D$ with some condition on
$w$-$\dim(D)$, the following statements are equivalent, which gives
new descriptions of quasi-Pr\"{u}fer domains; a result reminiscent
of the well-known result of Ayache, Cahen and Echi \cite{ACE} (see
also \cite[Theorem 6.7.8]{FHP}).

\begin{itemize}
\item[(1)] Each overring of $D$ is a
$w$-stably strong S-domain.

\item[(2)] Each overring of $D$ is a
$w$-strong S-domain.

\item[(3)] Each overring of $D$ is a
$w$-Jaffard domain.

\item[(4)] Each overring of $D$ is a
UM$t$ domain.

\item[(5)] $D$ is a quasi-Pr\"{u}fer domain.
\end{itemize}

Throughout, the letter $D$ denotes an integral domain with quotient
field $K$ and $F(D)$ denotes the set of nonzero fractional ideals.
Let $f(D)$ be the set of all nonzero finitely generated fractional
ideals of $D$. Let $*$ be a star operation on the domain $D$. For
every $A\in F(D)$, put $A^{*_f}:=\bigcup F^*$, where the union is
taken over all $F\in f(D)$ with $F\subseteq A$. It is easy to see
that $*_f$ is a star operation on $D$. A star operation $*$ is
called of \emph{finite character} if $*_f=*$. We say that a nonzero
ideal $I$ of $D$ is a \emph{$*$-ideal} of $D$, if $I^*=I$; a
\emph{$*$-prime}, if $I$ is a prime $*$-ideal of $D$. It has become
standard to say that a star operation $*$ is \emph{stable} if
$(A\cap B)^{*}=A^*\cap B^*$ for all $A$, $B\in F(D)$. Given a star
operation $*$ on an integral domain $D$ it is possible to construct
a star operation $\widetilde{*}$ which is stable and of finite
character defined as follows: for each $A\in F(D)$,
$$
A^{\widetilde{*}}:=\{x\in K|xJ\subseteq A,\text{ for some
}J\subseteq D, J\in f(D), J^*=D\}.
$$
The $\widetilde{*}$-dimension of $D$ is defined as follows:
$$
\widetilde{*}\text{-}\dim(D)=\sup\{\hight (P) \mid P\text{ is a
}\widetilde{*}\text{-prime ideal of } D\}.
$$

The most widely studied star operations on $D$ have been the
identity $d$, and $v$, $t:=v_f$, and $w:=\widetilde{v}$
operations, where $A^{v}:=(A^{-1})^{-1}$, with
$A^{-1}:=(D:A):=\{x\in K|xA\subseteq D\}$.

Let $D$ be a domain and $T$ an overring of $D$. Let $*$ and $*'$ be
star operations on $D$ and $T$, respectively. One says that $T$ is
\emph{$(*,*')$-linked to} $D$ if $F^{*}=D\Rightarrow (FT)^{*'}=T$
for each nonzero finitely generated ideal $F$ of $D$. As in
\cite{DHLZ} we say that $T$ is $t$-linked to $D$ if $T$ is
$(t,t)$-linked to $D$. As in \cite{DHLRZ} a domain $D$ is called
\emph{$t$-linkative} if each overring of $D$ is $t$-linked to $D$.
As a matter of fact $t$-linkative domains are exactly the domains
such that the identity operation coincides with the $w$-operation,
that is \emph{DW-domains} in the terminology of \cite{Mim}.

If $F\subseteq K$ are fields, then $\td_F(K)$ stands for the
\emph{transcendence degree} of $K$ over $F$. If $P$ is a prime ideal
of the domain $D$, then we set $\mathbb{K}(P):=D_P/PD_P$.

\section{$w$-Jaffard domains}

First we recall a special case of a general construction for
semistar operations (see \cite{S}). Let $D$ be an integral domain
with quotient field $K$, let $X$, $Y$ be two indeterminates over
$D$ and $*$ be a star operation on $D$. Set $D_1:=D[X]$,
$K_1:=K(X)$ and take the following subset of $\Spec(D_1)$:
$$\Theta_1^*:=\{Q_1\in\Spec(D_1)|\text{ }Q_1\cap D=(0)\text{ or }(Q_1\cap D)^{*_f}\subsetneq D\}.$$
Set $\mathfrak{S}_1^*:=D_1[Y]\backslash(\bigcup\{Q_1[Y]
|Q_1\in\Theta_1^*\})$ and:
$$E^{\circlearrowleft_{\mathfrak{S}_1^*}}:=E[Y]_{\mathfrak{S}_1^*}\cap
K_1, \text{   for all }E\in F(D_1).$$

It is proved in \cite[Theorem 2.1]{S} that the mapping
$*[X]:=\circlearrowleft_{\mathfrak{S}_1^*}: F(D_1)\to F(D_1)$,
$E\mapsto E^{*[X]}$ is a stable star operation of finite character
on $D[X]$, i.e., $\widetilde{*[X]}=*[X]$. It is also proved that
$\widetilde{*}[X]=*_f[X]=*[X]$, $d_D[X]=d_{D[X]}$. If
$X_1,\cdots,X_r$ are indeterminates over $D$, for $r\geq2$, we let
$$
*[X_1,\cdots,X_r]:=(*[X_1,\cdots,X_{r-1}])[X_r].
$$
For an integer $r$, put $*[r]$ to denote $*[X_1,\cdots,X_r]$ and
$D[r]$ to denote $D[X_1,\cdots,X_r]$.

Let $*$ be a star operation on $D$. A valuation overring $V$ of $D$
is called a \emph{$*$-valuation overring of $D$} provided that
$F^*\subseteq FV$, for each $F\in f(D)$. Following \cite{S}, the
\emph{$*$-valuative dimension} of $D$ is defined as:
$$
*\text{-}\dim_v(D):=\sup\{\dim(V)|V\text{ is }*\text{-valuation
overring of }D\}.
$$
It is shown in \cite[Theorem 4.5]{S} that
$$
\widetilde{*}\text{-}\dim_v(D)=\sup\{w\text{-}\dim(R)|R\text{ is a
}(*,t)\text{-linked over }D\}.
$$

It is observed in \cite{S} that we have always the inequality
$\widetilde{*}$-$\dim(D)\leq\widetilde{*}$-$\dim_v(D)$. We say that
$D$ is a \emph{$*$-Jaffard domain}, if
$*\text{-}\dim(D)=*\text{-}\dim_v(D)<\infty$. When $*=d$ the
identity operation then $d$-Jaffard domain coincides with the
classical Jaffard domain (cf. \cite{ABDFK}). It is proved in
\cite{S}, that $D$ is a $\widetilde{*}$-Jaffard domain if and only
if
$$*[X_1,\cdots,X_n]\text{-}\dim(D[X_1,\cdots,X_n])=\widetilde{*}\text{-}\dim(D)+n,$$
for each positive integer $n$. In \cite{SAHA} we gave examples to
show that the two classes of $w$-Jaffard and Jaffard domains are
incomparable by constructing a $w$-Jaffard domain which is not
Jaffard and a Jaffard domain which is not $w$-Jaffard.

We are now prepared to state and prove the first main result of this
paper.

\begin{thm}\label{main} Let $D$ be an integral domain of finite $w$-dimension. Then the
following statements are equivalent:
\begin{itemize}
\item[(1)] Each overring of $D$ is a
$w$-Jaffard domain.

\item[(2)] $D$ is a quasi-Pr\"{u}fer domain.
\end{itemize}
\end{thm}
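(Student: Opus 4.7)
The approach is to reduce both implications to the Fontana--Gabelli--Houston characterization: $D$ is quasi-Pr\"ufer if and only if every overring of $D$ is a UM$t$ domain \cite[Corollary 3.11]{FGH}. Under the hypothesis $w$-$\dim(D)<\infty$, the task becomes to show that ``every overring is $w$-Jaffard'' is equivalent to ``every overring is UM$t$'', so the proof splits into the two directions below.

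For $(2)\Rightarrow(1)$, assume $D$ is quasi-Pr\"ufer with $w$-$\dim(D)<\infty$. Every overring $T$ of $D$ is again quasi-Pr\"ufer (a standard property), and hence UM$t$ by \cite[Corollary 3.11]{FGH}. To promote UM$t$ to $w$-Jaffard I will invoke the result from \cite{S} that a UM$t$ domain of finite $w$-dimension is $w$-Jaffard, after verifying $w$-$\dim(T)<\infty$. For the latter, note that $D$ itself, being UM$t$ of finite $w$-dim, is $w$-Jaffard, so $w$-$\dim_v(D)<\infty$; then standard bounds on the dimensions of overrings of a quasi-Pr\"ufer domain (through the Pr\"ufer integral closure of $D$) give $w$-$\dim(T)\leq\dim(T)<\infty$.

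For the harder direction $(1)\Rightarrow(2)$, by \cite[Corollary 3.11]{FGH} it suffices to show that each overring $T$ of $D$ is UM$t$. Fix such a $T$; since any overring of $T$ is also an overring of $D$, $T$ inherits the full hypothesis. Pick an upper to zero $Q$ in $T[X]$ and aim to prove $Q$ is a maximal $t$-ideal, by contradiction: if $Q\subsetneq P$ for some $t$-prime $P$ of $T[X]$, then $P\cap T\neq(0)$, producing an extra chain of $w[X]$-primes in $T[X]$. Using the star operation $w[X]$ from \cite[Theorem 2.1]{S}, together with the iterated formula $w[X_1,\ldots,X_n]$-$\dim(T[X_1,\ldots,X_n])=w$-$\dim(T)+n$ characterizing the $w$-Jaffard property, I will translate this extra chain into an overring of $T$ whose $w$-valuative dimension strictly exceeds its $w$-dimension, contradicting the $w$-Jaffard hypothesis on overrings of $D$.

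The main obstacle lies in the last step of $(1)\Rightarrow(2)$: converting a non-$t$-maximal upper to zero into a concrete failure of the $w$-Jaffard equality for an explicit overring. The key handles will be the construction of the star operation $*[X]$ recalled just before the theorem, the formula for $\widetilde{*}$-$\dim_v$ in \cite[Theorem 4.5]{S}, and the selection of the right overring construction (likely a pullback or Nagata-type localization inside $T[X]$) that exposes the dimension gap forced by the existence of $Q\subsetneq P$.
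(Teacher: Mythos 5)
Your direction $(2)\Rightarrow(1)$ is essentially the paper's argument and is fine: every overring $T$ of a quasi-Pr\"ufer domain is quasi-Pr\"ufer, hence UM$t$; the finiteness of $w$-$\dim(T)$ follows (the paper gets it from $w$-$\dim(T)\le w$-$\dim_v(D)=w$-$\dim(D)$ via \cite[Theorem 4.5]{S} and the fact that quasi-Pr\"ufer domains are $t$-linkative; your route through $\dim_v(T)\le\dim_v(D)=\dim(D)=w$-$\dim(D)$ also works); and then \cite[Corollary 2.6]{Sah} upgrades UM$t$ of finite $w$-dimension to $w$-Jaffard.

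The direction $(1)\Rightarrow(2)$, however, has a genuine gap: the step you defer --- ``converting a non-$t$-maximal upper to zero into a concrete failure of the $w$-Jaffard equality for an explicit overring'' --- is precisely the content of the implication, and the route you sketch does not obviously close. First, a chain $0\subsetneq Q\subsetneq P$ in $T[X]$ with $Q$ an upper to zero and $P$ a $t$-prime only gives $\h(P)\ge\h(P\cap T)+1$, which is compatible with $w[X]$-$\dim(T[X])=w$-$\dim(T)+1$; the failure of UM$t$ does not by itself violate the $w$-Jaffard dimension formula for $T$ (indeed strong Mori domains are $w$-Jaffard without being UM$t$, so $w$-Jaffardness of $T$ alone can never yield UM$t$-ness of $T$). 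Second, any ``Nagata-type localization inside $T[X]$'' lives in $K(X)$, not in $K$, so it is not an overring of $D$ and hypothesis (1) says nothing about it; you would still have to transport the putative dimension gap back into a subring of $K$. The paper's mechanism is different and is the missing idea: for each prime $Q$ of an overring $T$, form the pullback $D(Q):=D_{\fq}+QT_Q$ with $\fq:=Q\cap D$, which \emph{is} an overring of $D$ and is a DW-domain by \cite[Theorem 3.1(2)]{Mim}, so that hypothesis (1) makes it an honest Jaffard domain; then the pullback formulas $\dim_v(D(Q))=\dim_v(T_Q)+\td_{\mathbb{K}(\fq)}(\mathbb{K}(Q))$ and $\dim(D(Q))=\dim(T_Q)$ force $\td_{\mathbb{K}(\fq)}(\mathbb{K}(Q))=0$, i.e.\ $D$ is residually algebraic, whence quasi-Pr\"ufer by \cite[Corollary 2.8]{AJ}. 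Without this (or some equally concrete) construction, your proof of $(1)\Rightarrow(2)$ is a plan rather than an argument.
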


\begin{proof} $(1)\Rightarrow(2)$ Let $Q$ be a prime ideal of an overring $T$ of
$D$, and set $\fq:=Q\cap D$. Let $\tau:T_Q\to\mathbb{K}(Q)$ be the
canonical surjection and let $\iota:\mathbb{K}(\fq)\to\mathbb{K}(Q)$
be the canonical embedding. Consider the following pullback diagram:
\begin{displaymath}
\xymatrix{ D(Q):=\tau^{-1}(\mathbb{K}(\fq))=D_{\fq}+QT_Q \ar[r]
\ar[d] & \mathbb{K}(\fq) \ar[d] \\
T_Q \ar[r]^{\tau} & \mathbb{K}(Q). }
\end{displaymath}
Since $T_Q$ is quasilocal and $\mathbb{K}(\fq)$ is a DW-domain, then
$D(Q)$ is a DW-domain by \cite[Theorem 3.1(2)]{Mim}. Thus the
$w$-operation coincides with the identity operation $d$ for $D(Q)$.
Since by the hypothesis $D(Q)$ is a $w$-Jaffard domain we actually
have $D(Q)$ is a Jaffard domain. On the other hand by
\cite[Proposition 2.5(a)]{ABDFK} we have
$$
\dim_v(D(Q))=\dim_v(T_Q)+\td_{\mathbb{K}(\fq)}(\mathbb{K}(Q)).
$$
In particular $\td_{\mathbb{K}(\fq)}(\mathbb{K}(Q))$ and
$\dim_v(T_Q)$ are finite numbers. Note that by \cite[Proposition
2.1(5)]{Fon} we have $\dim(D(Q))=\dim(T_Q)$ and since
$\dim_v(D(Q))=\dim(D(Q))$, we obtain that
$$
\dim(T_Q)=\dim_v(T_Q)+\td_{\mathbb{K}(\fq)}(\mathbb{K}(Q)).
$$
Since $\dim(T_Q)\leq\dim_v(T_Q)$, then
$\td_{\mathbb{K}(\fq)}(\mathbb{K}(Q))=0$. Consequently $D$ is a
residually algebraic domain, and hence is a quasi-Pr\"{u}fer domain
by \cite[Corollary 2.8]{AJ}.

$(2)\Rightarrow(1)$ Let $T$ be an overring of $D$. We claim that $T$
is of finite $w$-dimension. Since $D$ is a quasi-Pr\"{u}fer domain,
\cite[Theorem 2.4]{DHLRZ} implies that $D$ is a $t$-linkative and
UM$t$ domain. Thus in particular $T$ is a $t$-linked overring of
$D$. Then
\begin{align*}
w\text{-}\dim(T) \leq & \sup\{w\text{-}\dim(R)|R\text{ is }t\text{-linked over }D\}\\[1ex]
   = & w\text{-}\dim_v(D)=w\text{-}\dim(D)<\infty,
\end{align*}
where the first equality is by \cite[Theorem 4.5]{S}. Finally by
\cite[Corollary 2.6]{Sah}, every UM$t$ domain of finite
$w$-dimension is a $w$-Jaffard domain to deduce that $T$ is a
$w$-Jaffard domain.
\end{proof}

As an immediate corollary we have:

\begin{cor}\label{Umt} Let $D$ be an integral domain of finite $w$-dimension. Then the
following statements are equivalent:
\begin{itemize}
\item[(1)] Each $t$-linked overring of $D$ is a
$w$-Jaffard domain.

\item[(2)] $D$ is a UM$t$ domain.
\end{itemize}
\end{cor}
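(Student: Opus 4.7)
The plan is to derive the corollary from Theorem \ref{main} by localizing at $t$-primes, using the characterization (cited in the introduction via \cite[Theorem 1.5]{FGH}) that $D$ is a UM$t$ domain if and only if $D_P$ is quasi-Pr\"{u}fer for every $t$-prime $P$ of $D$.

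For $(2)\Rightarrow(1)$, I would let $T$ be a $t$-linked overring of $D$. By \cite[Corollary 2.6]{Sah}, $D$ is itself $w$-Jaffard, so $w\text{-}\dim_v(D)=w\text{-}\dim(D)<\infty$; then \cite[Theorem 4.5]{S} gives $w\text{-}\dim(T)\leq w\text{-}\dim_v(D)<\infty$. Combining this with the standard fact that a $t$-linked overring of a UM$t$ domain is again a UM$t$ domain, one final application of \cite[Corollary 2.6]{Sah} shows that $T$ is $w$-Jaffard.

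For $(1)\Rightarrow(2)$, I would fix a $t$-prime $P$ of $D$ and show $D_P$ is quasi-Pr\"{u}fer. Since $D_P$ is quasilocal it is a DW-domain, so its $w$-dimension equals $\h(P)\leq w\text{-}\dim(D)<\infty$, which makes Theorem \ref{main} applicable to $D_P$: it reduces the task to proving that every overring $S$ of $D_P$ is a $w$-Jaffard domain. The key observation is that any such $S$ is automatically $t$-linked to $D$, because for $F\in f(D)$ with $F^{t}=D$ the $t$-primality of $P$ forces $F\not\subseteq P$, so $F$ contains an element that is a unit in $D_P$ (and hence in $S$); this gives $FS=S$ and in particular $(FS)^{t}=S$. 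Hypothesis (1) then yields that $S$ is $w$-Jaffard, completing the reduction and therefore the proof.

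The main potential obstacle is ensuring the $t$-operation transfers correctly between $D$ and the relevant overrings: one must invoke an explicit reference for the fact that $t$-linked overrings of UM$t$ domains remain UM$t$ in the first direction, and for the bound $\h(P)\leq w\text{-}\dim(D)$ (which relies on $P$ being a $w$-prime because it is a $t$-prime) in the second. Both facts are routine within the $t$-operation literature but should be pinned down with precise citations.
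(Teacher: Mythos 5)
Your proof follows the paper's argument almost exactly: for $(2)\Rightarrow(1)$ you reproduce the paper's dimension bound via \cite[Theorem 4.5]{S} and \cite[Corollary 2.6]{Sah} (making explicit, as the paper leaves implicit, that a $t$-linked overring of a UM$t$ domain is UM$t$), and for $(1)\Rightarrow(2)$ you localize at a $t$-prime $P$ and apply Theorem \ref{main} to $D_P$ after checking that every overring of $D_P$ is $t$-linked to $D$ — the paper cites \cite[Proposition 2.9]{DHLZ} for this, and your direct verification of it is correct. One intermediate claim is false as stated, however: a quasilocal domain need not be a DW-domain (in a two-dimensional regular local ring $(D,M)$ one has $M^{-1}=D$, hence $M^{t}=M^{v}=D$, so $M$ is not a $t$-ideal and $d\neq w$). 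Fortunately you do not need that claim: the finiteness of $w\text{-}\dim(D_P)$ required to invoke Theorem \ref{main} follows from the trivial chain $w\text{-}\dim(D_P)\leq\dim(D_P)=\h(P)\leq w\text{-}\dim(D)<\infty$, using only the observation you already make that the $t$-prime $P$ is a $w$-prime of $D$.
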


\begin{proof} $(1)\Rightarrow(2)$ Let $P$ be a
$t$-prime ideal of $D$, and $T$ be an overring of $D_P$. Thus
$T=T_{D\backslash P}$ is a $t$-linked overring of $D$ by
\cite[Proposition 2.9]{DHLZ}. Therefore $T$ is a $w$-Jaffard domain
by the hypothesis. Consequently $D_P$ is a quasi-Pr\"{u}fer domain
by Theorem \ref{main}. Then $D$ is a UM$t$ domain by \cite[Theorem
1.5]{FGH}.

$(2)\Rightarrow(1)$ Let $T$ be a $t$-linked overring of $D$. Then as
the proof of Theorem \ref{main} we have
\begin{align*}
w\text{-}\dim(T) \leq & \sup\{w\text{-}\dim(R)|R\text{ is }t\text{-linked over }D\}\\[1ex]
   = & w\text{-}\dim_v(D)=w\text{-}\dim(D)<\infty.
\end{align*}
By \cite[Corollary 2.6]{Sah} we get that $T$ is a $w$-Jaffard
domain.
\end{proof}

\section{$w$-stably strong S-domains}

Let $*$ be a star operation on $D$. Following \cite{Sah} the domain
$D$ is called a \emph{$*$-strong S-domain}, if each pair of adjacent
$*$-prime ideals $P_1\subset P_2$ of $D$, extend to a pair of
adjacent $*[X]$-prime ideals $P_1[X]\subset P_2[X]$, of $D[X]$. If
for each $n\geq1$, the polynomial ring $D[n]$ is a $*[n]$-strong
S-domain, then $D$ is said to be an \emph{$*$-stably strong
S-domain}. It is observed in \cite{Sah} that a domain $D$ is
$*$-strong S-domain (resp. $*$-stably strong S-domain) if and only
if $D_P$ is strong S-domain (resp. stably strong S-domain) for each
$*$-prime ideal $P$ of $D$. Thus a strong S-domain (resp. stably
strong S-domain) $D$ is $*$-strong S-domain (resp. $*$-stably strong
S-domain) for each star operation $*$ on $D$. However, the converse
is not true in general; i.e., for some star operation $*$, the
domain $D$ might be $*$-strong S-domain (resp. $*$-stably strong
S-domain), but $D$ is not strong S-domain (resp. stably strong
S-domain). In \cite[Example 4.17]{MM} Malik and Mott gave an example
of a UM$t$ domain (in fact a Krull domain) which is not strong
S-domain. But a UM$t$ domain is a $w$-stably strong S-domain (and
hence $w$-strong S-domain as well) by \cite[Corollary 2.6]{Sah}.

We observe \cite[Corollary 2.3]{Sah} that a finite $w$-dimensional
$w$-stably strong S-domain is a $w$-Jaffard domain.

We are now prepared to state and prove the second main result of
this paper.

\begin{thm}\label{main2} Let $D$ be an integral domain of finite $w$-valuative dimension. Then the
following statements are equivalent:
\begin{itemize}
\item[(1)] Each overring of $D$ is a
$w$-stably strong S-domain.

\item[(2)] Each overring of $D$ is a
$w$-strong S-domain.

\item[(3)] Each overring of $D$ is a
UM$t$ domain.

\item[(4)] $D$ is a quasi-Pr\"{u}fer domain.
\end{itemize}
\end{thm}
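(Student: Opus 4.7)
The plan is to close the cycle $(4) \Rightarrow (3) \Rightarrow (1) \Rightarrow (2) \Rightarrow (4)$; three of the four implications are essentially immediate, and only $(2) \Rightarrow (4)$ requires substantive work. Specifically, $(4) \Rightarrow (3)$ is the Fontana--Gabelli--Houston result \cite[Corollary 3.11]{FGH}; $(3) \Rightarrow (1)$ follows from \cite[Corollary 2.6]{Sah}, which asserts that every UM$t$ domain is a $w$-stably strong S-domain; and $(1) \Rightarrow (2)$ is immediate from the definitions, since a $w$-stably strong S-domain is in particular a $w$-strong S-domain.

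For the substantive implication $(2) \Rightarrow (4)$, I would adapt the pullback argument used in the proof of Theorem \ref{main}. Fix an overring $T$ of $D$ and a prime $Q$ of $T$, set $\fq := Q \cap D$, and form the pullback $D(Q) := D_{\fq} + QT_Q$. The proof of Theorem \ref{main} already shows, via \cite[Theorem 3.1(2)]{Mim}, that $D(Q)$ is a DW-overring of $D$, so its $w$-operation coincides with the identity $d$. Hypothesis (2) applied to the overring $D(Q)$ therefore says that $D(Q)$ is a $w$-strong S-domain, and hence an ordinary strong S-domain. Moreover, every overring of $D(Q)$ lies in $K$ and contains $D$, so it is itself an overring of $D$; thus hypothesis (2) transfers to the statement that \emph{every} overring of $D(Q)$ is a $w$-strong S-domain.

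The crux of the proof will be to bridge the gap from ``$D(Q)$ is strong S'' to ``$D(Q)$ is Jaffard''. I would exploit the finite $w$-valuative dimension hypothesis on $D$---together with the DW property of $D(Q)$ and the pullback dimension formula $\dim_v(D(Q)) = \dim_v(T_Q) + \td_{\mathbb{K}(\fq)}(\mathbb{K}(Q))$ from \cite[Proposition 2.5(a)]{ABDFK}---to bound $\dim_v(D(Q))$. Combined with the ``$w$-strong S on every overring'' property noted above, this should upgrade $D(Q)$ to a $w$-stably strong S-domain, at which point the observation \cite[Corollary 2.3]{Sah} recorded immediately before the theorem gives that $D(Q)$ is a Jaffard domain. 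Once $D(Q)$ is Jaffard, the argument concludes exactly as in Theorem \ref{main}: combining $\dim(D(Q)) = \dim(T_Q)$ from \cite[Proposition 2.1(5)]{Fon} with the dimension formula and the inequality $\dim(T_Q) \leq \dim_v(T_Q)$ forces $\td_{\mathbb{K}(\fq)}(\mathbb{K}(Q)) = 0$. Hence $D$ is residually algebraic, and therefore quasi-Pr\"{u}fer by \cite[Corollary 2.8]{AJ}.
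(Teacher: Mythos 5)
Your overall architecture (the cycle $(4)\Rightarrow(3)\Rightarrow(1)\Rightarrow(2)\Rightarrow(4)$, with the first three implications handled by citation) matches the paper, and those three easy implications are fine. The problem is the substantive implication $(2)\Rightarrow(4)$. The pivotal sentence ``this should upgrade $D(Q)$ to a $w$-stably strong S-domain'' is precisely the step that needs a proof, and none is given. Knowing that every overring of $D(Q)$ is a $w$-strong S-domain gives no direct control over the polynomial rings $D(Q)[X_1,\dots,X_n]$ (equivalently, over the localizations $D(Q)_P$ being \emph{stably} strong S-domains), which is what the definition of $w$-stably strong S-domain requires; indeed, the equivalence ``every overring is a strong S-domain $\Leftrightarrow$ every overring is a stably strong S-domain $\Leftrightarrow$ quasi-Pr\"{u}fer'' is essentially the Ayache--Cahen--Echi theorem that this result generalizes, so assuming the upgrade is close to circular. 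The gap propagates: your concluding step needs $\dim_v(D(Q))=\dim(D(Q))$, i.e.\ $D(Q)$ Jaffard, which is exactly what the unproved upgrade (via \cite[Corollary 2.3]{Sah}) was supposed to deliver.

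The paper's proof of $(2)\Rightarrow(4)$ avoids Jaffardness altogether. After establishing, as you do, that $D(Q)$ is a DW-domain and hence an honest strong S-domain, it first checks that $D(Q)$ is finite dimensional: being DW, $D(Q)$ is $t$-linked over $D$, so $\dim(D(Q))=w\text{-}\dim(D(Q))\leq w\text{-}\dim_v(D)<\infty$ by \cite[Theorem 4.5]{S} (this is where the finite $w$-valuative dimension hypothesis enters, not through a bound on $\dim_v(D(Q))$). Then Kaplansky's \cite[Theorem 39]{K} gives $\dim(D(Q)[X])=\dim(D(Q))+1$ for the finite-dimensional strong S-domain $D(Q)$, and the sharper pullback inequality \cite[Proposition 2.7]{ABDFK},
$$
1+\dim(T_Q)+\min\{\td_{\mathbb{K}(\fq)}(\mathbb{K}(Q)),1\}\leq\dim(D(Q)[X]),
$$
combined with $\dim(D(Q))=\dim(T_Q)$ from \cite[Proposition 2.1(5)]{Fon}, forces $\min\{\td_{\mathbb{K}(\fq)}(\mathbb{K}(Q)),1\}=0$ directly. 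So the fix is to replace your ``upgrade to $w$-stably strong S'' step by this one-variable polynomial-ring argument, which uses only the strong S property of $D(Q)$ itself (note that you cited \cite[Proposition 2.5(a)]{ABDFK}, the valuative-dimension formula, where what is needed here is the Krull-dimension inequality of \cite[Proposition 2.7]{ABDFK}).
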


\begin{proof} The implication $(1)\Rightarrow(2)$ is trivial,
and $(3)\Rightarrow(1)$ holds by \cite[Corollary 2.6]{Sah}.

$(2)\Rightarrow(4)$ Let $Q$ be a prime ideal of an overring $T$ of
$D$ and set $\fq:=Q\cap D$. As in the proof of Theorem \ref{main} we
have the following pullback diagram:
\begin{displaymath}
\xymatrix{ D(Q) \ar[r]
\ar[d] & \mathbb{K}(\fq) \ar[d] \\
T_Q \ar[r]^{\tau} & \mathbb{K}(Q). }
\end{displaymath}
Since $T_Q$ is quasilocal and $\mathbb{K}(\fq)$ is a DW-domain, then
$D(Q)$ is a DW-domain by \cite[Theorem 3.1(2)]{Mim}. Thus the
$w$-operation coincides with the identity operation $d$ for $D(Q)$.
Since by the hypothesis $D(Q)$ is a $w$-strong S-domain, we actually
have $D(Q)$ is a strong S-domain. Next we claim that $D(Q)$ is of
finite dimension. Indeed since $D(Q)$ is a DW-domain it is in fact a
$t$-linked overring of $D$. Then
\begin{align*}
\dim(D(Q))= & w\text{-}\dim(D(Q)) \\[1ex]
   \leq & \sup\{w\text{-}\dim(R)|R\text{ is }t\text{-linked over }D\}\\[1ex]
   = & w\text{-}\dim_v(D)<\infty,
\end{align*}
where the second equality is by \cite[Theorem 4.5]{S}. On the other
hand by \cite[Proposition 2.7]{ABDFK} we have the inequality belove
\begin{align*}
1+\dim(T_Q)+\min\{\td_{\mathbb{K}(\fq)}(\mathbb{K}(Q)),1\}\leq &\dim(D(Q)[X]) \\[1ex]
   = & \dim(D(Q))+1\\[1ex]
   = & \dim(T_Q)+1.
\end{align*}
The first equality holds since $D(Q)$ is strong S-domain and
\cite[Theorem 39]{K}, and the second one holds by \cite[Proposition
2.1(5)]{Fon}. Thus $\td_{\mathbb{K}(\fq)}(\mathbb{K}(Q))=0$.
Consequently $D$ is a residually algebraic domain and hence is a
quasi-Pr\"{u}fer domain by \cite[Corollary 2.8]{AJ}.

$(4)\Rightarrow(3)$ Suppose that $D$ is a quasi-Pr\"{u}fer domain
and let $T$ be an overring of $D$. Thus $T$ is also a
quasi-Pr\"{u}fer domain. Therefore $T$ is a UM$t$ domain by
\cite[Theorem 2.4]{DHLRZ}.
\end{proof}

As an immediate corollary we have:

\begin{cor}\label{Umt2} Let $D$ be an integral domain of finite $w$-valuative dimension. Then the
following statements are equivalent:
\begin{itemize}
\item[(1)] Each $t$-linked overring of $D$ is a
$w$-stably strong S-domain.

\item[(2)] Each $t$-linked overring of $D$ is a
$w$-strong S-domain.

\item[(3)] Each $t$-linked overring of $D$ is a
UM$t$ domain.

\item[(4)] $D$ is a UM$t$ domain.
\end{itemize}
\end{cor}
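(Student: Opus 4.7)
The corollary is the $t$-linked-overring analog of Theorem~\ref{main2}, bearing the same relation to it that Corollary~\ref{Umt} bears to Theorem~\ref{main}. I would structure the argument as the cycle $(1)\Rightarrow(2)\Rightarrow(4)\Rightarrow(3)\Rightarrow(1)$. Two of the implications come for free: $(1)\Rightarrow(2)$ is a tautology, and $(3)\Rightarrow(1)$ is immediate from \cite[Corollary 2.6]{Sah}, which says every UM$t$ domain is a $w$-stably strong S-domain. The substantive content lies in $(2)\Rightarrow(4)$ and $(4)\Rightarrow(3)$.

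For $(2)\Rightarrow(4)$, by \cite[Theorem 1.5]{FGH} it suffices to show that $D_P$ is quasi-Pr\"ufer for each $t$-prime ideal $P$ of $D$. Fix such a $P$. The plan is to apply Theorem~\ref{main2} to the domain $D_P$. Every overring $R$ of $D_P$ satisfies $R = R_{D\setminus P}$, since $D\setminus P$ consists of units of $D_P$, so by \cite[Proposition 2.9]{DHLZ} $R$ is $t$-linked over $D$; hence the hypothesis (2) guarantees that $R$ is a $w$-strong S-domain. The remaining input is the finiteness of $w$-$\dim_v(D_P)$: using \cite[Theorem 4.5]{S} together with transitivity of $t$-linkedness (and the fact that $D_P$ itself is $t$-linked over $D$), every $t$-linked overring of $D_P$ is $t$-linked over $D$, so $w$-$\dim_v(D_P)\leq w$-$\dim_v(D)<\infty$.

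For $(4)\Rightarrow(3)$, let $T$ be a $t$-linked overring of $D$ and let $Q$ be a $t$-prime ideal of $T$. A standard property of $t$-linked extensions forces $\fq := Q\cap D$ to be a $t$-prime of $D$; since $D$ is UM$t$, \cite[Theorem 1.5]{FGH} gives that $D_{\fq}$ is quasi-Pr\"ufer. Because $T_Q$ is an overring of $D_{\fq}$ and overrings of quasi-Pr\"ufer domains are quasi-Pr\"ufer, $T_Q$ is quasi-Pr\"ufer; invoking \cite[Theorem 1.5]{FGH} in the reverse direction, $T$ is UM$t$. The only real delicacy, in my view, is the bookkeeping in the previous paragraph that passes finite $w$-valuative dimension from $D$ down to $D_P$; but this is a clean invocation of \cite[Theorem 4.5]{S} and transitivity of $t$-linkedness, and should not present any genuine obstacle.
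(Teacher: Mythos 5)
Your proof follows the same cycle $(1)\Rightarrow(2)\Rightarrow(4)\Rightarrow(3)\Rightarrow(1)$ as the paper, and the two substantive implications largely match its argument. For $(2)\Rightarrow(4)$ you, exactly as in the paper, fix a $t$-prime $P$, observe via \cite[Proposition 2.9]{DHLZ} that every overring of $D_P$ is $t$-linked over $D$ and hence a $w$-strong S-domain, and then apply Theorem~\ref{main2} to $D_P$ followed by \cite[Theorem 1.5]{FGH}. You go slightly beyond the paper in explicitly verifying that $w\text{-}\dim_v(D_P)<\infty$, which is needed to invoke Theorem~\ref{main2} for $D_P$ and which the paper leaves tacit; your justification via \cite[Theorem 4.5]{S} and transitivity of $t$-linkedness is correct and is a worthwhile addition. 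The one place you genuinely diverge is $(4)\Rightarrow(3)$: the paper simply cites \cite[Theorem 3.1]{S1} (that $t$-linked overrings of UM$t$ domains are UM$t$), whereas you reprove this by localizing $T$ at its $t$-primes. That argument is sound in substance, but contains a small imprecision: $t$-linkedness gives only that $(Q\cap D)^t\neq D$, not that $\fq=Q\cap D$ is itself a $t$-prime. This is harmless, since $\fq$ is then contained in a maximal $t$-ideal $M$ of $D$, so $D_{\fq}$ is a localization of the quasi-Pr\"ufer domain $D_M$ and is therefore quasi-Pr\"ufer, and the conclusion that $T_Q$ is quasi-Pr\"ufer (hence $T$ is UM$t$) still follows; but as written the claim should be corrected.
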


\begin{proof} The implication $(1)\Rightarrow(2)$ is trivial.

For $(2)\Rightarrow(4)$ let $P$ be a $t$-prime ideal of $D$, and $T$
be an overring of $D_P$. Thus $T=T_{D\backslash P}$ is a $t$-linked
overring of $D$ by \cite[Proposition 2.9]{DHLZ}. Therefore $T$ is a
$w$-strong S-domain by the hypothesis. Consequently $D_P$ is a
quasi-Pr\"{u}fer domain by Theorem \ref{main2}. Then $D$ is a UM$t$
domain by \cite[Theorem 1.5]{FGH}.

$(4)\Rightarrow(3)$ Suppose $T$ is a $t$-linked overring of $D$.
Then $T$ is a UM$t$ domain by \cite[Theorem 3.1]{S1}.

$(3)\Rightarrow(1)$ Is true by \cite[Corollary 2.6]{Sah}.
\end{proof}

Note that the equivalence $(3)\Leftrightarrow(4)$ in Theorem
\ref{main2} (resp. Corollary \ref{Umt2}) is well known
\cite[Corollary 3.11]{FGH} (resp. \cite[Theorem 2.6]{CZ}), but our
proof is completely different.

\begin{center} {\bf ACKNOWLEDGMENT}

\end{center}
I would like to thank the referee for carefully reading the first
version of this paper.


\begin{thebibliography}{10}

\bibitem{ABDFK} D. F. Anderson, A. Bouvier, D. Dobbs, M. Fontana and S. Kabbaj, {\em On Jaffard domain},
Expo. Math., {\bf 6}, (1988), 145--175.

%\bibitem{AC} A. Ayache and P. Cahen, {\em Anneaux verifiant absolument l'inegalit\'{e} ou la formule de la dimension},
%Boll. Un. Mat. Ital., {\bf 6-B}, (1992), 36--65.

\bibitem{ACE} A. Ayache and P. Cahen and O. Echi, {\em Anneaux
quasi-Pr\"{u}f\'{e}riens et P-anneaux}. Boll. Un. Mat. Ital {\bf
10-B}, (1996), 1--24.

\bibitem{AJ} A. Ayache and A. Jaballah, {\em Residually algebraic pairs of rings}, Math. Z. {\bf 225} (1997),
49--65.

%\bibitem{CF} G.W. Chang and M. Fontana, {\em Uppers to zero in polynomial rings and Pr\"ufer-like
%domains}, Comm. Algebra {\bf 37} (2009), 164--192.

\bibitem{CZ} G.W. Chang and M. Zafrullah, {\em The $w$-integral closure of integral domains},
J. Algebra, {\bf 295}, (2006), 195--210.

\bibitem{DHLZ} D. E. Dobbs, E. G. Houston, T. G. Lucas and M. Zafrullah, {\em T-linked
overrings and Pr\"ufer v-multiplication domains}, Comm. Algebra {\bf
17} (1989), 2835--2852.

\bibitem{DHLRZ} D. E. Dobbs, E. G. Houston, T. G. Lucas M. Roitman, and M. Zafrullah,
{\em On t-linked overrings}, Comm. Algebra {\bf 20}. No. 5, (1992),
1463--1488.

%\bibitem{EF} S. El Baghdadi and M. Fontana, {\em Semistar linkedness and
%flatness, Pr\"{u}fer semistar multiplication domains}, Comm. Algebra
%{\bf 32} (2004), 1101--1126.

%\bibitem{EFP} S. El Baghdadi, M. Fontana and G. Picozza, {\em
%Semistar Dedekind domains}, J. Pure Appl. Algebra {\bf 193} (2004),
%27--60.

\bibitem{Fon} M. Fontana, {\em Topologically defined classes of commutative rings}, Ann. Mat. Pura Appl. {\bf
123}, (1980), 331--355.

\bibitem{FGH} M. Fontana, S. Gabelli and E. Houston, {\em UMT-domains and domains
with Pr\"{u}fer integral closure}, Comm. Algebra {\bf 26}, (1998),
1017--1039.

%\bibitem{FH} M. Fontana and J. A. Huckaba, {\em Localizing systems and semistar
%operations}, in: S. Chapman and S. Glaz (Eds.), Non Noetherian
%Commutative Ring Theory, Kluwer, Dordrecht, 2000, 169--197.

\bibitem{FHP} M. Fontana, J. Huckaba, and I. Papick, {\em Pr\"{u}fer domains}, New York, Marcel
Dekker, 1997.

\bibitem{FL} M. Fontana and K. A. Loper, {\em Nagata rings, Kronecker function
rings and related semistar operations}, Comm. Algebra {\bf 31}
(2003), 4775--4801.

%\bibitem{FJS} M. Fontana, P. Jara and E. Santos, {\em Pr\"{u}fer
%$\star$-multiplication domains and semistar operations}, J. Algebra
%Appl. {\bf 2} (2003), 21--50.

\bibitem{G} R. Gilmer, {\em Multiplicative ideal theory}, New York, Dekker, 1972.

\bibitem{HZ} E. Houston and M. Zafrullah, {\em On $t$-invertibility, II}, Comm. Algebra
{\bf 17} (1989), 1955--1969.

\bibitem{K} I. Kaplansky, {\em Commutative rings}, rev. ed., Univ. Chicago Press, Chicago, 1974.

\bibitem{MM} S. Malik and J. L. Mott, {\em Strong S-domains}, J. Pure Appl. Algebra, {\bf
28} (1983), 249--264.

\bibitem{Mim} A. Mimouni, {\em Integral domains in which each ideal is a $w$-ideal}, Comm. Algebra
{\bf 33}, No. 5, (2005), 1345--1355.

\bibitem{S} P. Sahandi, {\em Semistar-Krull and valuative dimension of integral
domains}, Ricerche Mat., {\bf 58}, (2009), 219--242.

\bibitem{Sah} P. Sahandi, {\em Universally catenarian integral domains, strong S-domains and semistar operations},
Comm. Algebra {\bf 38}, No. 2, (2010), 673--683.

\bibitem{S1} P. Sahandi, {\em Semistar dimension of polynomial rings and Pr\"{u}fer-like domains},
Bull. Iranian Math. Soc., to appear, arXiv:0808.1331v2 [math.AC].

\bibitem{SAHA} P. Sahandi, {\em W-Jaffard domains in pullbacks}, J.
Algebra and its Applications, to appear,  arXiv:1003.1565v3
[math.AC].

\end{thebibliography}
\end{document}